\newtheorem{theorem}{Theorem}[section]
\newtheorem{lemma}[theorem]{Lemma}
\newtheorem{corollary}[theorem]{Corollary}
\newtheorem*{proposition*}{Proposition}
\newtheorem*{acknowledgement}{Acknowledgement}
\theoremstyle{remark}
\newtheorem{remark}[theorem]{Remark}
\numberwithin{equation}{section}
\newcommand{\R}{\mathbb{R}}
\newcommand{\rn}{\mathbb{R}^n}
\newcommand\abs[1]{\left|#1\right|}
\newcommand\map[5]{\begin{array}{ccccc}
#1 & : & #2 & \to & #3 \\
& & #4 & \mapsto & #5 \\
\end{array}}
\title{Laplacian of the distance function on the cut locus on a riemannian manifold}
\author{François Générau}
\date{December 2, 2019}
\begin{document}

\maketitle

\begin{abstract}
  In this note, we prove that, on a riemannian manifold $M$, the laplacian of the distance function to a point $b$ is $-\infty$ in the sense of barriers, at every point of the cut locus of $M$ with respect to $b$. We apply this result to an obstacle-type variational problem with the distance function as an obstacle, in order to replace it with a smoother obstacle.
\end{abstract}
{\bf Keywords:} distance function, cut locus, obstacle problem, Riemannian manifold.

\section{Introduction}

Let $M$ be a Riemannian manifold of dimension $n \geq 2$ without boundary, and $b$ a point in $M$. We denote by $d$ the distance function on $M$ and $d_b$ the distance function to the point $b$. Let $T>0$ and $\gamma : [0,T] \to M$ be a unit speed geodesic such that $\gamma(0) = b$, $t_0\in (0,T)$ and $p = \gamma(t_0)$.
We say that $p$ is a \emph{cut point} of $b$ along $\gamma$ if $\gamma$ is length minimizing between $b$ and $p$, but not after $p$, \textit{i.e} $d_b(\gamma(t)) = t$ for $t\leq t_0$, and $d_b(\gamma(t)) < t$ for $t>t_0$.
The \emph{cut locus} of $M$ with respect to $b$, denoted by $Cut_b(M)$, is defined as the set of all cut points of $b$. Some well known facts related to the cut locus include:
\begin{itemize}
  \item $Cut_b(M)$ is the set of points $p$ in $M$, such that either there exist at least two minimizing geodesics from $b$ to $p$, or the points $b$ and $p$ are \emph{conjugate} (see for instance \cite[proposition 4.1]{sakai1996riemannian}),
  \item $Cut_b(M)$ is the closure of the set of points $p$ in $M$, such that there exist at least two minimizing geodesics from $b$ to $p$,
  \item the function $d_b$ is smooth outside $Cut_b(m)$ (see \cite[proposition 4.8]{sakai1996riemannian}),
  \item given $p\in M$, if there are at least two minimizing geodesics from $b$ to $p$, then the function $d_b$ is not differentiable at $p$ (see \cite[proposition 4.8]{sakai1996riemannian}).
\end{itemize}

Our main result is the following theorem. It describes the non-smooth behavior of the distance function at each point of the cut locus, in terms of its laplacian, in the sense of barriers.

\begin{theorem}\label{lemma:laplacian on the cut locus}
  Let $M$ be a smooth manifold without boundary of dimension ${n\geq 2}$, and $b\in M$. Let $d_b$ be the distance function to the point $b$, and $Cut_b(M)$ the cut locus with respect to $b$. The laplacian of $d_b$ on $Cut_b(M)$ is $-\infty$ in the sense of barriers. More precisely, for any $p\in Cut_b(M)$ and $A>0$, there exists a smooth function $\phi$ defined on a neighborhood of $p$ such that
  \begin{equation*}
    \phi\geq d_b, \quad \phi(p)=d_b(p) \quad \text{and} \quad {\Delta \phi(p)\leq -A}.
  \end{equation*}
\end{theorem}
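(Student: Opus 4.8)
The plan is to build the barrier $\phi$ from the elementary \emph{secant bound}: for any $q\in M$ the function $g_q(x):=d_b(q)+d(x,q)$ satisfies $g_q\ge d_b$ on $M$ by the triangle inequality; it is smooth on a neighborhood of any point $x_0\notin Cut_q(M)$ (by the smoothness of $d(\cdot,q)$ recalled in the introduction); and, crucially, if $q$ lies on a minimizing geodesic from $b$ to $p$ then $g_q(p)=d_b(p)$, so $g_q$ touches $d_b$ from above at $p$, with $\nabla g_q(p)$ the final velocity of that geodesic. Fix $p\in Cut_b(M)$, $A>0$, and a minimizing geodesic $\gamma:[0,\ell]\to M$ with $\gamma(0)=b$, $\gamma(\ell)=p$, $\ell=d_b(p)$. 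By the first fact recalled in the introduction, $p$ is either (ii) the endpoint of at least two minimizing geodesics from $b$, or (i) conjugate to $b$ along $\gamma$; every cut point falls into (i) or (ii), and I treat the two cases separately.

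\emph{Case (ii).} Choose two distinct minimizing geodesics $\gamma_1,\gamma_2$ from $b$ to $p$ and put $q_i:=\gamma_i(\ell-\epsilon)$ with $\epsilon>0$ small, so that $d(q_i,p)=\epsilon$ lies below the injectivity radius at $q_i$ and hence $p\notin Cut_{q_i}(M)$. Then $g_i:=g_{q_i}$ is smooth near $p$, with $g_i\ge d_b$, $g_i(p)=\ell$, and $\nabla g_i(p)=\dot\gamma_i(\ell)=:v_i$; since $\gamma_1\ne\gamma_2$ are geodesics through $p$, $v_1\ne v_2$. With $h:=g_1-g_2$ and a parameter $\delta>0$, set
\[
\phi:=\tfrac12(g_1+g_2)-\tfrac12\sqrt{h^2+\delta^2}+\tfrac{\delta}{2}.
\]
This is smooth near $p$ (the radicand is $\ge\delta^2>0$); from $\sqrt{h^2+\delta^2}\le|h|+\delta$ one gets $\phi\ge\min(g_1,g_2)\ge d_b$ near $p$; and $\phi(p)=\ell-\tfrac\delta2+\tfrac\delta2=d_b(p)$. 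A short computation using $h(p)=0$ and $\nabla h(p)=v_1-v_2$ gives
\[
\Delta\phi(p)=\tfrac12\big(\Delta g_1(p)+\Delta g_2(p)\big)-\frac{|v_1-v_2|^2}{2\delta},
\]
so, with $\epsilon$ (hence the finite numbers $\Delta g_i(p)$) fixed, choosing $\delta>0$ small enough makes $\Delta\phi(p)\le -A$.

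\emph{Case (i).} Now a single secant suffices: take $q_\delta:=\gamma(\delta)$ with $\delta>0$ small. One checks first that $p\notin Cut_{q_\delta}(M)$ for suitable arbitrarily small $\delta$: the only minimizing geodesic from $q_\delta$ to $p$ is $\gamma|_{[\delta,\ell]}$ (a second one, prepended with $\gamma|_{[0,\delta]}$, would be a length-$d_b(p)$ curve with a corner, impossible), and the first conjugate point of $q_\delta$ along $\gamma$ lies at parameter $\ge\ell$ by minimality of $\gamma|_{[\delta,\ell]}$, and strictly beyond $\ell$ outside a negligible set of $\delta$. For such $\delta$, $\phi:=g_{q_\delta}$ is smooth near $p$, with $\phi\ge d_b$ and $\phi(p)=\delta+(\ell-\delta)=d_b(p)$. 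It remains to show $\Delta\phi(p)=\Delta_x d(x,q_\delta)\big|_{x=p}\to-\infty$ as $\delta\to0^+$. In geodesic polar coordinates around $q_\delta$ this quantity equals $\partial_r\log\mathcal A_\delta$ at $r=d(q_\delta,p)=\ell-\delta$, where $\mathcal A_\delta$ is the volume density — equivalently, it is the mean curvature at $p$ of the geodesic sphere $\partial B(q_\delta,\ell-\delta)$. As $\delta\to0$ one has $\mathcal A_\delta\to\mathcal A_0$ and $\mathcal A_\delta(\ell-\delta)\to\mathcal A_0(\ell)=0$, since $p$ is conjugate to $b$ along $\gamma$; as $\mathcal A_0>0$ on $(0,\ell)$, a Jacobi-field (Riccati) comparison shows $\partial_r\log\mathcal A_\delta(\ell-\delta)\to-\infty$, i.e. the geodesic sphere becomes infinitely concave at $p$ as its radius approaches the conjugate distance. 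Hence for $\delta$ small, $\phi$ is the required barrier. The smoothing identity of Case (ii) is routine; the main obstacle is this last step of Case (i) — proving rigorously that $\partial_r\log\mathcal A_\delta(\ell-\delta)\to-\infty$, especially when the conjugate point has multiplicity greater than one so that $\mathcal A_0$ vanishes to higher order at $\ell$, together with the companion verification that $p\notin Cut_{q_\delta}(M)$ for suitable small $\delta$, which is what makes $g_{q_\delta}$ genuinely smooth near $p$.
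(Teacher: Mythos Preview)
Your Case (ii) is correct and self-contained: the smooth-min barrier $\phi=\tfrac12(g_1+g_2)-\tfrac12\sqrt{h^2+\delta^2}+\tfrac\delta2$ is a clean alternative to the paper's construction, which instead invokes the semiconcavity of $d_b$ from \cite[Proposition 3.4]{mantegazza_mennucci_2003} to extract two distinct subgradients and then builds a quadratic barrier in normal coordinates. Both arguments exploit the corner of $d_b$ at $p$ and drive the Laplacian to $-\infty$ through the angle $\abs{v_1-v_2}>0$; yours is more geometric and avoids the external semiconcavity input.

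Case (i) is where the two proofs genuinely diverge. The paper works entirely near $p$: it fixes $q$ on $\gamma$ just before $p$ and $p_\epsilon=\gamma(1+\epsilon)$ just after, compares the second fundamental forms at $q$ of the spheres $\partial B(b,R)$, $\partial B(p,r)$, $\partial B(p_\epsilon,r_\epsilon)$ via the Jacobi field witnessing conjugacy, and then manufactures an auxiliary hypersurface $S_{h_\epsilon}$ so that $\phi_\epsilon=d(\cdot,S_{h_\epsilon})+R$ is the barrier. Your Calabi-type move --- slide the center to $q_\delta=\gamma(\delta)$ near $b$ and take $\phi=\delta+d(\cdot,q_\delta)$ --- is far lighter, but the two points you flag are genuine obligations, not formalities. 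They can be discharged. For the first: reversing $\gamma$, the set of $\delta$ with $\gamma(\delta)$ conjugate to $p=\gamma(\ell)$ is discrete (Morse index theorem), so combined with your corner argument one gets $p\notin Cut_{q_\delta}(M)$ for all small $\delta$ off a discrete set. For the second: write $H_\delta(t)$ for the mean curvature of $\partial B(q_\delta,t-\delta)$ at $\gamma(t)$; the Riccati identity gives $H_\delta'\le K$ for a local curvature bound $K$, hence $H_\delta(\ell)\le H_\delta(t_0)+K(\ell-t_0)$ for any $t_0\in(\delta,\ell)$; continuous dependence of Jacobi fields on initial data gives $H_\delta(t_0)\to H_0(t_0)$ as $\delta\to0$; and $H_0(t_0)\to-\infty$ as $t_0\to\ell^-$, since $\log\det A_0(t)\to-\infty$ forces its derivative $H_0$ to be unbounded below while $H_0'\le K$ prevents any recovery. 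Chaining these yields $H_\delta(\ell)\to-\infty$. With these steps written out your argument is complete and noticeably shorter than the paper's; what the paper's heavier hypersurface construction buys is locality --- it never has to analyse cut or conjugate structure along the full length of $\gamma$.
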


\begin{remark}
  One could also prove an analog result for the cut locus with respect to a submanifold of $M$ instead of a point $b$. We restricted ourselves to this case for simplicity.
\end{remark}

This result proves useful when one wants to apply the maximum principle to a function involving the distance function $d_b$. As an application, we use it to prove the corollary \ref{cor:cor} below.

Our motivation is the following obstacle-type variational problem, and its relation with $Cut_b(M)$:
\begin{equation}\label{eq:variational problem}
  \min_{\substack{u\in H^1(M)\\ u\leq d_b}} \int_{M} \abs{\nabla u}^2 -m\int_{M}u,
\end{equation}
where $m\in(0,+\infty)$.
This problem can be seen as an extension to manifolds of the elastic-plastic torsion problem, which was extensively studied in the 60's and 70's - see for instance \cite{caffarelli_friedman_elastoplastic} (Cafarelli and Friedman), and the references therein. Indeed, the original elastic-plastic torsion problem boils down to the following problem:
\begin{equation}\label{eq:elastic-plastic problem}
  \min_{\substack{u\in H^1_0(\Omega)\\ u\leq d_{\partial \Omega}}} \int_{\Omega} \abs{\nabla u}^2 -m\int_{\Omega}u,
\end{equation}
where $\Omega$ is a bounded lipschitz open subset of $\rn$, and $d_{\partial \Omega}$ the distance function to its boundary. The analog of the cut locus in this flat setting is called the \emph{ridge} of $\Omega$ in \cite{caffarelli_friedman_elastoplastic}. It is the closure of the set of points of $\Omega$ that have at least two closest points on the boundary $\partial \Omega$.
Let $u_m$ and $v_m$ be the minimizers of \eqref{eq:variational problem} and \eqref{eq:elastic-plastic problem} respectively.
In \cite{caffarelli_friedman_elastoplastic}, using the fact that $\abs{\nabla v_m}\leq 1$, the authors show that the ridge of $\Omega$ is contained in the set $\{v_m<d_{\partial \Omega}\}$.
In problem \eqref{eq:variational problem}, the inequality $\abs{\nabla u_m}\leq 1$ doesn't hold in general. However, using the theorem \ref{lemma:laplacian on the cut locus} above, we can still prove that
\begin{equation}
  Cut_b(M) \subset \{u_m<d_b\}. \label{eq:inclusion}
\end{equation}
Indeed, theorem \ref{lemma:laplacian on the cut locus} yields the following:
\begin{corollary}\label{cor:cor}
  For any $m>0$, there exists a function $\widetilde{d_b}$ that is smooth on $M\setminus\{b\}$, such that $u_m \leq \widetilde{d_b} \leq d_b$, $\widetilde{d_b}=d_b$ in a neighborhood of $b$ and $\widetilde{d_b}<d_b$ on $Cut_b(M)$. In particular, the solution $u_m$ of \eqref{eq:variational problem} is also the solution of the following variational problem with a smooth obstacle:
  \begin{equation}
    \label{eq:variational problem with regularized d}
    \min_{\substack{u\in H^1(M)\\ u\leq \widetilde{d_b}}} \int_{M} \abs{\nabla u}^2 -m\int_{M}u.
  \end{equation}
\end{corollary}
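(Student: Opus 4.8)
The plan is to construct $\widetilde{d_b}$ as a smooth function that sits strictly below $d_b$ on the cut locus but coincides with $d_b$ near $b$, by using the barrier functions from Theorem \ref{lemma:laplacian on the cut locus} together with a partition of unity. First I would observe that the key property we need is a comparison principle: since $u_m$ is the minimizer of \eqref{eq:variational problem}, $u_m$ is subharmonic on the (open) noncoincidence set $\{u_m < d_b\}$ in the appropriate sense (indeed $-\Delta u_m = m > 0$ there, and more relevantly $\Delta u_m \geq -m > -\infty$ globally in the sense of distributions/barriers since $u_m \leq d_b$ and $d_b$ has a negative-enough Laplacian wherever it is singular). The standard obstacle-problem reasoning shows that wherever the obstacle is touched, the obstacle itself must satisfy $\Delta d_b \geq -m$ in the barrier sense; but Theorem \ref{lemma:laplacian on the cut locus} says $\Delta d_b = -\infty$ in the barrier sense at every point of $Cut_b(M)$, which already forces $u_m < d_b$ on $Cut_b(M)$, i.e. establishes \eqref{eq:inclusion}. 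This inclusion is what makes the corollary plausible: we have room to push the obstacle down on $Cut_b(M)$ without affecting $u_m$.

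Next I would carry out the construction of $\widetilde{d_b}$. Fix $m > 0$ and set $A = m + 1$ (any constant exceeding $m$ works). For each $p \in Cut_b(M)$, Theorem \ref{lemma:laplacian on the cut locus} provides a neighborhood $U_p$ and a smooth function $\phi_p$ on $U_p$ with $\phi_p \geq d_b$, $\phi_p(p) = d_b(p)$, and $\Delta \phi_p(p) \leq -A$; by continuity of $\Delta \phi_p$, after shrinking $U_p$ we may assume $\Delta \phi_p < -m$ on all of $U_p$. Since $b \notin Cut_b(M)$ and $Cut_b(M)$ is closed, I can choose these $U_p$ to avoid a fixed neighborhood $V$ of $b$. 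The open sets $\{U_p\}_{p \in Cut_b(M)}$ together with $M \setminus Cut_b(M)$ cover $M$; take a smooth (locally finite) partition of unity. The difficulty here is that naively averaging the $\phi_p$ with the $d_b$ coming from the complement chart could destroy the inequality $\widetilde{d_b} \leq d_b$ on the overlaps, since $d_b$ is only continuous, not smooth, near the cut locus. The fix is to take a \emph{minimum-like} combination rather than a convex combination: using the $\phi_p \geq d_b$ inequalities, one shows that on a suitable neighborhood $\mathcal{N}$ of $Cut_b(M)$ one can glue the $\phi_p$'s (e.g. via a smooth regularized minimum, or by a careful partition-of-unity argument exploiting that each $\phi_p \geq d_b$) into a single smooth function $\psi$ on $\mathcal{N}$ with $d_b \leq \psi$, $\Delta \psi < -m$ on $\mathcal{N}$, and — crucially — $\psi < $ (something controllably close to $d_b$); then set $\widetilde{d_b} := \min\{d_b^{\mathrm{smooth}}, \text{interpolation}\}$... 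Actually the cleanest route: extend each $\phi_p$ and interpolate between the family $\{\phi_p\}$ and the value $d_b$ itself on the region where $d_b$ is already smooth, choosing cutoffs so that near $Cut_b(M)$ the function equals a convex combination of the $\phi_p$'s (hence $\geq d_b$ and with Laplacian $< -m$, hence $> d_b$ strictly at cut points since $d_b$ cannot satisfy $\Delta d_b \geq -m$ there) wait — I want $\widetilde{d_b} < d_b$, not $>$. Let me restate: define $\widetilde{d_b}$ to equal $d_b$ outside $\mathcal{N}$, and inside $\mathcal{N}$ to be obtained by \emph{subtracting} a bump: near each $p \in Cut_b(M)$ the barrier gives $\phi_p \geq d_b$ with strict strongly-negative Laplacian, so $d_b \geq \phi_p - \varepsilon$ is false in general — instead note $\phi_p(p) = d_b(p)$ but $\Delta\phi_p(p) < 0$ very negative means $\phi_p$ lies \emph{above} $d_b$ in a deleted neighborhood while touching at $p$; this is the wrong direction for lowering. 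The correct observation is simpler: I do not need barriers to lower the obstacle, I only need \eqref{eq:inclusion}.

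So the actual construction is: since $Cut_b(M) \subset \{u_m < d_b\}$ and the latter is open, and since $d_b - u_m$ is lower semicontinuous and positive on the closed set $Cut_b(M)$, there is an open neighborhood $\mathcal{N} \supset Cut_b(M)$ with $\overline{\mathcal{N}}$ not meeting $\{b\}$ and with $u_m \leq d_b - \delta$ on a smaller neighborhood — more precisely, for each point of $Cut_b(M)$ pick a neighborhood on which $u_m \leq d_b - \delta_p$. Now mollify: $d_b$ is continuous on $M\setminus\{b\}$ and smooth away from $Cut_b(M)$; using local smoothing (convolution in charts with the manifold mollifier, or running a short time of heat flow) we can find a smooth function $\widetilde{d_b}$ on $M \setminus \{b\}$ with $u_m \leq \widetilde{d_b} \leq d_b$, equal to $d_b$ in a neighborhood of $b$ and on $M$ minus a slightly larger neighborhood of $Cut_b(M)$ — standard, since there $d_b$ is already smooth — and with $\widetilde{d_b} < d_b$ on $Cut_b(M)$ because we have the $\delta_p$-gap room to spend on the mollification error. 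For the final assertion, since $u_m \leq \widetilde{d_b} \leq d_b$, any competitor for \eqref{eq:variational problem with regularized d} is a competitor for \eqref{eq:variational problem}, so the minimum over the smaller class is $\geq$ that over the larger; but $u_m$ itself is admissible for \eqref{eq:variational problem with regularized d}, so it achieves that minimum, proving $u_m$ solves \eqref{eq:variational problem with regularized d} as well. The main obstacle in writing this cleanly is Step one — deducing \eqref{eq:inclusion} rigorously from Theorem \ref{lemma:laplacian on the cut locus}: one must argue that at a point $p$ where $u_m(p) = d_b(p)$, choosing the barrier $\phi$ with $\Delta\phi(p) \leq -A$ for $A > m$ contradicts the subharmonicity estimate $\Delta u_m \geq -m$ (which holds since $u_m$ minimizes and the obstacle is touched from below), because $\phi - u_m \geq \phi - d_b \geq 0$ vanishes at $p$, so $p$ is an interior minimum of $\phi - u_m$, forcing $\Delta(\phi - u_m)(p) \geq 0$ in the barrier/viscosity sense, i.e. $-m \leq \Delta u_m(p) \leq \Delta\phi(p) \leq -A < -m$, a contradiction; hence $p \notin Cut_b(M)$.
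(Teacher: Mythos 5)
Your final argument is correct and essentially reproduces the paper's proof: you derive the inclusion \eqref{eq:inclusion} from the barriers of Theorem \ref{lemma:laplacian on the cut locus} combined with $\Delta u_m\geq -m$ and the maximum principle, upgrade it to a uniform gap $u_m\leq d_b-\epsilon$ near $Cut_b(M)$ using semicontinuity and compactness, and then lower and smooth the obstacle only inside that gap region via a cutoff and a mollification, which is exactly the paper's construction $\widetilde{d_b}=(1-\rho)d_b+\rho d_{b,\epsilon}$, followed by the same two-line comparison of admissible classes for the final assertion. The abandoned detour in the middle (gluing the barrier functions $\phi_p$ themselves into the new obstacle) should simply be deleted, since, as you yourself observe, those functions lie \emph{above} $d_b$ and play no role in the construction.
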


In turn, this corollary implies \eqref{eq:inclusion}. What is more, it can be used to show that, with the regularity theory for obstacle-type problems, $u_m$ is locally $C^{1,1}$ regular. For a broader study of problem \eqref{eq:variational problem} and its relation with the cut locus, we refer to \cite{generau_oudet_velichkov}.

Note that in \cite{mantegazza2014distributional} (and in particular theorem 2.10), C.Mantegazza, G.Mascellani and G.Uraltsev gave a complete description of the distributional hessian and laplacian of $d_b$ using geometric measure theory tools. Here we are interested in inequalities in the sense of barriers, and we use mainly tools from riemannian geometry.

This paper is organized as follows. In section \ref{section:proof of the proposition}, we give a proof of theorem \ref{lemma:laplacian on the cut locus}.
In section \ref{section:application to a variational problem}, we apply the theorem to prove corollary \ref{cor:cor}.

\begin{acknowledgement}
  We would like to thank Bozhidar Velichkov for some very useful discussions and ideas used in this paper.
\end{acknowledgement}

\section{Proof of theorem \ref{lemma:laplacian on the cut locus}}\label{section:proof of the proposition}
\begin{proof}
Let $p\in Cut_b(M)$ and $A>0$. We know that either there exist two minimizing geodesics from $b$ to $p$, or there exists a unique minimizing geodesic from $b$ to $p$ along which the two points are conjugate.

\emph{Case one}. There exist two minimizing geodesics from $b$ to $p$. This implies that $d_b$ is not differentiable at $p$. Let $\delta>0$ be smaller than the distance between $p$ and $Cut_p(M)$, and $\psi:B(p,\delta)\to \rn$ be a normal coordinate map around $p$.
According to \cite[Proposition 3.4]{mantegazza_mennucci_2003}, there exists $C>0$ such that the function $x\mapsto C \abs{x}^2-d_b\circ\psi^{-1}(x)$ is convex. As it is not differentiable at $0$, it has at least two subgradients $v$ and $w$. We have
\begin{equation}
  \label{eq:two tangent planes} C\abs{x}^2-d_b\circ\psi^{-1}(x) \geq \max(v\cdot x,w \cdot x) - d_b(p),
\end{equation}
with equality for $x=0$. For $B>0$ to be chosen big enough later, let us define the function $f:\rn \to \R$ by
\[f(x) := \frac{1}{2}(v\cdot x + w \cdot x) + B(v\cdot x - w \cdot x)^2 -  d_b(p).\]
Then for $x$ in a neighborhood of $0$ we have
\begin{align}
  f(x) &= \max(v\cdot x,w \cdot x) - \frac{1}{2}\abs{v\cdot x - w \cdot x} + B(v\cdot x - w \cdot x)^2 - d_b(p) \nonumber \\
      &\leq \max(v\cdot x,w \cdot x)  -  d_b(p)
  \label{eq:two planes estimate}
\end{align}
with equality at $x=0$. Setting $\phi=C\abs{\psi}^2-f\circ\psi$, we get from \eqref{eq:two tangent planes} and \eqref{eq:two planes estimate} that for $q$ in a neighborhood of $p$,
\begin{equation*}
  d_b(q)\leq \phi(q),
\end{equation*}
with equality at $q=p$. What is more, as $\psi$ is a normal coordinate map, we have
\begin{equation}
   \Delta \phi(p)
   = \Delta (\phi \circ \psi^{-1})(0)
   = 2nC-2B\abs{v-w}^2. \nonumber
\end{equation}
In particular if $B$ is big enough, then we have $\Delta \phi(p)\leq -A$, which concludes case one.

\emph{Case two}. There exists a unique minimizing geodesic $\gamma$ such that $\gamma(0)=b$ and $\gamma(1)=p$. For $r>0$ to be chosen small enough later, let $q$ be the intersection point of $\gamma$ with the sphere $\partial B(p,r)$, and $R = d_b(q)$. For $\epsilon>0$, let us define
\begin{align}
  p_\epsilon&:=\gamma(1+\epsilon), \nonumber\\
  r_\epsilon&:=r+d(p,p_\epsilon). \nonumber
\end{align}
\begin{figure}
 \begin{center}
%
%
\includegraphics[height=6cm]{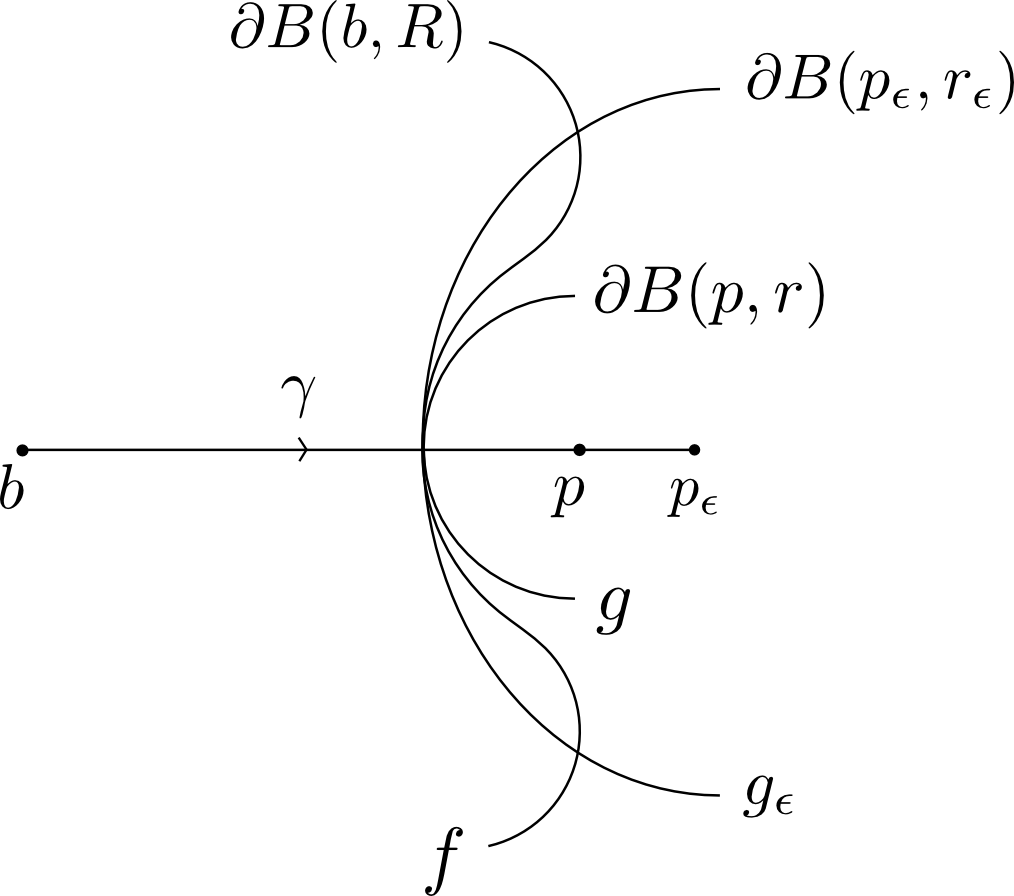}
 \end{center}

\caption{Construction of  $r$, $R$, $q$, $p_\epsilon$ , $r_\epsilon$, $f$, $g$, $g_\epsilon$.}
\label{fig:conjugate point}
\end{figure}
We may choose $r$ and $\epsilon$ so that $p$ and $p_\epsilon$ are contained in a totally normal geodesic ball $B(q,\delta)$ for some $\delta>0$.
This implies in particular that for any points $p'$ and $q'$ in neighborhoods of $p$ and $q$ respectively, there exists a unique minimizing geodesic $\mu_{(p',q')}$ between $p'$ and $q'$, and $\mu_{(p',q')}$ depends continuously on $(p',q')$ (see \cite[Remark 3.8]{carmo1992riemannian}). Let us fix some normal coordinates at $q$ on $B(q,\delta)$ such that the unit normal vector to $\partial B(b,R)$ at $q$ is $\partial_n$. We will use these normal coordinates throughout the rest of the proof.
In these coordinates, around $q$, $\partial B(b,R)$ is the graph of a smooth function $f$ such that $f(0)=0$ and $\nabla f(0)=0$.
Likewise, $\partial B(p,r)$ and $\partial B(p_\epsilon,r_\epsilon)$ can be seen around $q$ as the graphs of some smooth functions $g$ and $g_\epsilon$ respectively, such that $g(0)=g_\epsilon(0)=0$ and $\nabla g(0)=\nabla g_\epsilon(0)=0$.

\noindent\emph{Step one.} We prove that
\begin{equation}
  \exists v\in \R^{n-1},\;v \neq 0, \quad v \cdot Hf(0) v = v \cdot Hg(0) v, \label{eq:same curvature bis}
\end{equation}
where $HF$ denotes the hessian matrix of the function $F$.
As the points $b$ and $p$ are conjugate, there exists a Jacobi field $J$ along $\gamma$ such that $J(0)=0$, $J(1)=0$ and $J(t)\neq 0 $ for $t\in(0,1)$.
We may extend it to a global smooth field $\widetilde{J}$ on $M$. Let $(\Phi_{\widetilde{J}}^s)_{s\in\R}$ be the flow of this vector field, defined by the following equation:
\begin{equation*}
  \forall x \in M, \; \forall s \in \R, \; \frac{\mathrm{d}}{\mathrm{d}s}\Phi_{\widetilde{J}}^s(x) = \widetilde{J}(\Phi_{\widetilde{J}}^s(x)).
\end{equation*}
We define a variation $\Gamma$ of the curve $\gamma$ as follows:
\[\forall s\in \R, \forall t\in[0,1],\;\Gamma(s,t) := \Phi_{\widetilde{J}}^s(\gamma(t)).\]
For any $s\in \R$, $\Gamma(s,\cdot)$ is still a curve from $b$ to $p$, as $\widetilde{J}(0)=0$ and $\widetilde{J}(1)=0$. As $\gamma$ is a geodesic, we have
\[\frac{\mathrm{d}}{\mathrm{d}s}[\mathrm{length}(\Gamma(s,\cdot))]_{s=0} = 0.\]
As $\Gamma$ is a fixed point variation of $\gamma$ whose variation field $J$ is a Jacobi field, we also have (see \cite[Proposition 10.14]{lee2006riemannian} for instance).
\begin{equation}
  \frac{\mathrm{d^2}}{\mathrm{d}s^2}[\mathrm{length}(\Gamma(s,\cdot))]_{s=0} = 0. \label{eq:length second variation}
\end{equation}
By the implicit function theorem, for any $s$ close enough to $0$, there exist some unique times $t_R(s)$ and $t_r(s)$ such that
\[d(\Gamma(s,t_R(s)),b) = R \quad \text{and} \quad d(\Gamma(s,t_r(s)),p) = r.\]
Moreover, the functions $s\mapsto t_R(s)$ and $s\mapsto t_r(s)$ are smooth. Now we set
\[M(s):=\Gamma(s,t_R(s)) \quad \text{and} \quad N(s) := \Gamma(s,t_r(s)).\]
We have
\begin{align}
  \mathrm{length}(\Gamma(s,\cdot))
  &= \mathrm{length}({\Gamma(s,\cdot)}_{|_{[0,t_R(s)]}}) + \mathrm{length}({\Gamma(s,\cdot)}_{|_{[t_R(s),t_r(s)]}}) \nonumber \\ &\qquad +\mathrm{length}({\Gamma(s,\cdot)}_{|_{[t_r(s),1]}}) \nonumber \\
  & \geq d(\Gamma(s,0),\Gamma(s,t_R(s))) + d(\Gamma(s,t_R(s)),\Gamma(s,t_r(s))) \nonumber \\
  & \qquad + d(\Gamma(s,t_r(s)),\Gamma(s,1))  \nonumber \\
  & = R + d(M(s),N(s)) + r. \nonumber
\end{align}
Meanwhile, as $s$ goes to $0$, thanks to \eqref{eq:length second variation}, we also have
\begin{align*}
  \mathrm{length}(\Gamma(s,\cdot)) = \mathrm{length}(\gamma) + o(s^2) = R+r+o(s^2),
\end{align*}
so $d(M(s),N(s))\leq o(s^2)$. In our normal coordinates around $q$, we have $d(x,y)\geq c \abs{x-y}$ where $c>0$ is a constant and $\abs{\:\cdot\:}$ is the euclidean norm. This comes from the fact that the metric $g$ of $M$ and the euclidean metric are locally equivalent in any coordinates system. With this remark, we get
\begin{equation}
  M(s) = N(s) + o(s^2), \label{eq:equality order 2}
\end{equation}
where the equality is to be understood in coordinates. For any $s$ small, let $x_M(s),x_N(s)\in \R^{n-1}$ be such that $M(s) = (x_M(s),f(x_M(s)))$ and $N(s) = (x_N(s),g(x_N(s)))$. The functions $x_M$ and $x_N$ are smooth because $M$ and $N$ are. Using the fact that $\nabla f(0)=\nabla g(0)=0$, \eqref{eq:equality order 2} gives
\begin{align}
  x_M'(0) \cdot Hf(0) x_M'(0) = x_N'(0) \cdot Hg(0) x_N'(0). \label{eq:same curvature}
\end{align}
Let us show that $x_N'(0)=x_M'(0)$. We have
\begin{align}
  M'(0) &= \partial_s \Gamma(0,t_R(0)) + \partial_t \Gamma(0,t_R(0))t_R'(0) \nonumber \\
        & = J(t_R(0)) + \dot{\gamma}(t_R(0))t_R'(0). \label{eq:M'(0)}
\end{align}
By definition the curve $M$ is on the sphere $\partial B(b,R)$, so $M'(0)$ is tangent to this sphere and orthogonal to $\dot{\gamma}(t_R(0))$ by the Gauss lemma. Also, as $J(0)=0$ and $J(1)=0$, the Jacobi field $J$ is normal to $\gamma$ (see \cite[lemma 10.6]{lee2006riemannian}), so $J(t_R(0))$ is orthogonal to $\dot{\gamma}(t_R(0))$.
Combined with these facts, \eqref{eq:M'(0)} yields $M'(0) = J(t_R(0))$. Likewise, we have $N'(0) = J(t_r(0)) = J(t_R(0))$. So $M'(0) = N'(0)$, and consequently $x_M'(0) = x_N'(0)$.
What is more, as $J(t)\neq 0$ for $t\in(0,1)$, we have $x_M'(0)\neq 0$. So setting $v = x_M'(0)$, with \eqref{eq:same curvature} we get \eqref{eq:same curvature bis}.

\noindent\emph{Step two.} Now we want to show that
\begin{equation}
  v \cdot Hg(0) v > v \cdot Hg_\epsilon(0) v. \label{eq:different curvature}
\end{equation}
Let us first show that $g\geq g_\epsilon$ in a neighborhood of $0$. Let us argue by contradiction and assume that for some $x\in \R^{n-1}$, we have $g(x) < g_\epsilon(x)$. Let $\mu :[0,1] \to M$ be the shortest geodesic from the point of coordinate $(x,g(x))$ to the point of coordinate $(0,r)$, \textit{i.e.} $p$. If $x$ has been taken close enough to $0$, then for $t\in[0,1]$, $\mu(t)$ stays inside the normal neighborhood of $q$, $B(q,\delta)$, on which our normal coordinates are defined.
For $t\in[0,1]$, let $x(t)\in\R^{n-1}$ and $z(t)\in \R$ be such that in coordinates, $\mu(t) = (x(t),z(t))$. We have $z(0)=g(x)<g_\epsilon(x) = g_\epsilon(x(0))$. And $z(1) = r > 0 = g_\epsilon(0) = g_\epsilon(x(1))$. As $\mu$ is continuous, there exists $t\in(0,1)$ such that $z(t) = g_\epsilon(x(t))$, \textit{i.e.} $\mu(t)\in \partial B(p_\epsilon,r_\epsilon)$. This implies
\begin{align*}
  r_\epsilon
  &= d(\mu(t),p_\epsilon) \\
  &\leq d(\mu(t),p) + d(p,p_\epsilon) \\
  &< d(\mu(0),p) + d(p,p_\epsilon) \\
  &= r + d(p,p_\epsilon) \\
  &= r_\epsilon,
\end{align*}
which gives a contradiction. We conclude that $g\geq g_\epsilon$. In particular, for any $w\in \R^{n-1}$, we have $w \cdot Hg(0) w \geq w \cdot Hg_\epsilon(0) w$. Thus, the matrix $Hg(0)-Hg_\epsilon(0)$ is symetric non-negative. To show that it is positive definite and conclude that \eqref{eq:different curvature} holds, we only need to show that
\begin{equation}
  \forall w\in \R^{n-1}, \quad Hg(0) w \neq Hg_\epsilon(0) w. \label{eq:different hessian}
\end{equation}
To prove this, let us define the exponential map $\exp_S$ associated with a smooth oriented hypersurface $S$ of $M$.
Let $\mathrm{Exp}$ be the global exponential map, defined on the whole tangent space of $M$. Let $\nu_S$ be the unit normal vector to $S$. We set
\begin{equation}
  \nonumber \map{\exp_{S}}{S \times [0,\infty]}{M}{(\theta,t)}{Exp(t\nu_S(\theta))}
\end{equation}
The geodesic $\gamma$ is minimizing between $p_\epsilon$ and $\partial B(p_\epsilon,r_\epsilon)$. In particular, the point $p$ is not a cut point of $q$ along $\gamma$, so lemma \ref{lemma:smoothness} below shows that the differential of the map $\exp_{\partial B(p_\epsilon,r_\epsilon)}$ is invertible at $(q,r)$.
On the contrary, the map $\exp_{\partial B(p,r)}(\cdot,r)$ is constant and so its differential is null at $q$. In particular, for any $w\in T_q\partial B(p,r)$, we have
 \begin{align}\label{eq:normal different differential}
   \nabla_w \nu_{\partial B(p_\epsilon,r_\epsilon)}(q) \neq \nabla_w \nu_{\partial B(p,r)}(q),
 \end{align}
 where $\nabla_w$ denotes the covariant derivative in the direction $w$.
 Let us define the families of vectors $(u_i^g)_{1\leq i \leq n-1}$ and $(u_i^{g_\epsilon})_{1\leq i \leq n-1}$ at the point of coordinates $(x,z)\in \R^{n-1}\times \R$, by $u_i^g(x,z) := \partial_i + \partial_i g(x) \partial_n$ and $u_i^{g_\epsilon}(x,z) := \partial_i + \partial_i g_\epsilon(x) \partial_n$.
 These vectors form a basis of the tangent spaces of $\partial B(p,r)$ and $\partial B(p_\epsilon,r_\epsilon)$ respectively, as we recall that these surfaces are the graphs of the functions $g$ and $g_\epsilon$.
 Furthermore, the two basis are identical at $(0,0)$. Thus, \eqref{eq:normal different differential} is equivalent to
 \begin{align*}
   \exists i\in\{1,..,n-1\}, \; u_i^{g_\epsilon} \cdot \nabla_w \nu_{\partial B(p_\epsilon,r_\epsilon)}(q) \neq u_i^g \cdot \nabla_w \nu_{\partial B(p,r)}(q).
 \end{align*}
 Note that if some vector fields $X$ and $Y$ are orthogonal, then we have $X\cdot Y = 0$, and so $\nabla_w X \cdot Y + X \cdot \nabla_w Y = 0$. So we get
 \begin{align*}
   \exists i\in\{1,..,n-1\}, \nabla_w u_i^{g_\epsilon} \cdot  \nu_{\partial B(p_\epsilon,r_\epsilon)}(q) &\neq \nabla_w u_i^g \cdot \nu_{\partial B(p,r)}(q), \\
   \textit{i.e.} \quad \exists i\in\{1,..,n-1\}, \nabla_w u_i^{g_\epsilon} \cdot \partial_n &\neq \nabla_w u_i^g \cdot \partial_n.
 \end{align*}
 Recalling that in normal coordinates, the tangent vectors $(\partial_j)_{1\leq j \leq n}$ have vanishing covariant derivatives at $0$, we get
 \begin{align*}
   \exists i\in\{1,..,n-1\}, w^j\partial_{ji}g_\epsilon(0) &\neq w^j\partial_{ji}g(0),
 \end{align*}
 which is \eqref{eq:different hessian}. So \eqref{eq:different curvature} holds.

 \noindent\emph{Step three.} Putting steps one and two together, we get
 \begin{equation}
   \exists v \neq 0, \quad v \cdot (Hg_\epsilon(0) - Hf(0))v < 0. \nonumber
 \end{equation}
 In particular, there exists a basis $(v_i)_{1\leq i \leq n-1}$ of $\R^{n-1}$ with $v_1 = v$, that is orthogonal for the quadratic form $q_\epsilon(w) = w \cdot (Hg_\epsilon(0) - Hf(0))w$. Let $k_\epsilon$ be a quadratic form of $\R^{n-1}$ such that the $(v_i)$ are orthogonal for $k_\epsilon$, $k_\epsilon(v) = 0$ and for $i\geq2$, $k_\epsilon(v_i) = \max(0,q_\epsilon(v_i)+1)$.
 This way we have $k_\epsilon>q_\epsilon$, and $k_\epsilon\geq0$. Now let us set ${h_\epsilon} = g_\epsilon - k_\epsilon$. Note that in dimension $2$, we actually have $h_\epsilon = g_\epsilon$, and the rest of the proof is a bit less technical.
 By construction we have
 \begin{align}
   {h_\epsilon} &\leq g_\epsilon, \label{eq:leq h}\\
   {h_\epsilon} &= g_\epsilon \quad \text{on $\R v$}, \label{eq:= h}\\
   {h_\epsilon} &\leq f \quad \text{on $B(0,\rho_\epsilon)$, for some $\rho_\epsilon>0$,}\label{eq:leq f}
 \end{align}
 where \eqref{eq:leq f} comes from the fact that the hessian of ${h_\epsilon}$ at $0$ verifies $H{h_\epsilon}(0) = Hg_\epsilon(0) - k_\epsilon < Hg_\epsilon(0) - q_\epsilon = Hf(0)$, and ${h_\epsilon}(0) = f(0) = 0$, $\nabla{h_\epsilon}(0) = \nabla f(0) = 0$.
 It will be convenient, at the end of the proof, to have a smooth function $h: \R^{n-1} \to \R$ such that
 \begin{equation}\label{eq:barrier h}
   h(0) = 0,\quad \nabla h(0) = 0, \quad \text{and for all $\epsilon>0$ sufficiently small},\quad h_\epsilon \geq h.
 \end{equation}
 To see that such function exists, we note that for $\epsilon<\epsilon'$, we have $g_\epsilon\geq g_{\epsilon'}$. This can be proven the same way we proved $g_\epsilon\leq g$ at the beginning of step two. We then fix $\epsilon'>0$.
 Therefore, for any $\epsilon$ sufficiently small, we have $g_{\epsilon'}\leq g_\epsilon \leq g$. This allows us to see that the quadratic forms $q_\epsilon$, and then $k_\epsilon$, are bounded independently of $\epsilon$, by a quadratic form $k$. In turn, this implies that the function $h = g_{\epsilon'} - k$ verifies \eqref{eq:barrier h}.
 Let $S_{h_\epsilon}$ be the hypersurface of $M$ defined in our normal coordinates around $q$ as the graph of the function ${h_\epsilon}$ on $B(0,\rho_\epsilon)$.
 Let us set
 \begin{equation*}
   \phi_\epsilon = d(\cdot, S_{h_\epsilon}) + R.
 \end{equation*}
 We will show that for $\epsilon$ sufficiently small, $\phi_\epsilon$ is the function $\phi$ we are looking for to prove the theorem.

 Let us first show that $\phi_\epsilon \geq d_b$ in a neighborhood of $p$. Let $p'\in B(q,\delta)$ be a point near $p$. Let $\mu:[0,1] \to M$ be a length minimizing geodesic between $S_{h_\epsilon}$ and $p'$.
 For $t\in [0,1]$, let $(x(t),z(t))\in \R^{n-1}\times \R$ be the coordinates of $\mu(t)$. As $\mu(0)\in S_{h_\epsilon}$, we have $z(0) = {h_\epsilon}(x(0)) \leq f(x(0))$. Moreover, the coordinates $(x,z) = (0,r)$ of $p$ verify $z>f(x)$.
 Therefore, provided $p'$ is close enough to $p$, we have $z(1) \geq f(x(1))$. Thus, there exists $t\in[0,1]$ such that $z(t) = f(x(t))$, \textit{i.e.} $\mu(t)\in\partial B(b,R)$. This implies
 \begin{equation}
   d(\cdot, S_{h_\epsilon}) \geq d(\cdot, \partial B(b,R)) \quad \text{on a neighborhood of $p$}, \label{eq:S under B}
 \end{equation}
 and so $\phi_\epsilon \geq d_b$ on a neighborhood of $p$.

 As $q\in S_{h_\epsilon}$, we also have $\phi_\epsilon(p) \leq d(p,q) + R = d_b(p)$. So we get $\phi_\epsilon(p) = d_b(p)$ as well.

 We are left to show that $\phi_\epsilon$ is smooth around $p$, and that given $A >0$, we have $\Delta \phi_\epsilon (p) \leq -A$, provided $\epsilon$ has been taken small enough. Let us first show that $\phi_\epsilon$ is smooth around $p$. Between $p_\epsilon$ and $q$, the geodesic $\gamma$ is minimizing among all geodesics from $p_\epsilon$ to $\partial B(p_\epsilon,r_\epsilon)$.
 The same way we have shown that ${h_\epsilon}\leq f$ implies $d(\cdot, S_{h_\epsilon}) \geq d(\cdot, \partial B(b,R))$ , one can show that ${h_\epsilon}\leq g_\epsilon$ implies
 \begin{equation}
 d(\cdot, S_{h_\epsilon}) \geq d(\cdot, \partial B(p_\epsilon,r_\epsilon))\quad \text{on a neighborhood of $p$}. \label{eq:S under B epsilon}
 \end{equation}
 Therefore, between $q$ and $p_\epsilon$, $\gamma$ is still minimizing among geodesics from $p_\epsilon$ to $S_{h_\epsilon}$. We may apply lemma \ref{lemma:smoothness} to conclude that $\phi_\epsilon$ is smooth at $p$.
 Now for $t$ in a neighborhood of $0$, let us define $c(t) := \exp_{\partial B(p_\epsilon,r_\epsilon)}(r,\exp_q(tv,g_\epsilon(tv)))$. We have,
 \begin{align*}
   d(c(t), S_{h_\epsilon})
   &\geq d(c(t), \partial B(p_\epsilon,r_\epsilon)) \quad \text{because of \eqref{eq:S under B epsilon},}\\
   &= d(c(t),\exp_q(tv,g_\epsilon(tv))) \quad \text{because of lemma \ref{lemma:smoothness},}\\
   &= d(c(t),\exp_q(tv,{h_\epsilon}(tv))) \quad \text{because of \eqref{eq:= h},} \\
   &\geq d(c(t), S_{h_\epsilon}).
 \end{align*}
 Therefore, the first inequality is actually an equality:
 \begin{equation*}
   d(c(t), S_{h_\epsilon})
   = d(c(t), \partial B(p_\epsilon,r_\epsilon)).
 \end{equation*}
 As $d(\cdot, S_{h_\epsilon})$ and $d(\cdot, \partial B(p_\epsilon,r_\epsilon))$ have the same gradient at $c(0)=p$, we deduce that
 \begin{equation*}
   \nabla^2_{\dot{c}(0),\dot{c}(0)}[d(\cdot, S_{h_\epsilon})]_{p}
   = \nabla^2_{\dot{c}(0),\dot{c}(0)}[d(\cdot, \partial B(p_\epsilon,r_\epsilon)]_{p}.
 \end{equation*}
 We have $\dot{c}(0) \neq 0$ because of lemma \ref{lemma:smoothness} again. Setting $w_1 := \dot{c}(0)/\abs{\dot{c}(0)}$, we get
 \begin{equation}\label{eq:second derivative w_1}
   \nabla^2_{w_1,w_1}[d(\cdot, S_{h_\epsilon})]_{p}
   = \nabla^2_{w_1,w_1}[d(\cdot, \partial B(p_\epsilon,r_\epsilon)]_{p}.
 \end{equation}
 Let us complete $w_1$ into an orthonormal basis $(w_1,..,w_n)$ of $TpM$.
 Let $S_{h}$ be the hypersurface of $M$ defined in our normal coordinates at $q$ as the graph of the function $h$ from \eqref{eq:barrier h}. The same way we have proved \eqref{eq:S under B}, one can prove that $d(\cdot, S_{h_\epsilon}) \leq d(\cdot, S_{h})$. As these two functions are equal up to order $1$ at $p$, we deduce that
 \begin{equation*}
   \nabla^2_{w_i,w_i}[d(\cdot, S_{h_\epsilon})]_{p}
   \leq \nabla^2_{w_i,w_i}[d(\cdot, S_{h}]_{p}.
 \end{equation*}
 Moreover, we know from \cite[proposition 3.4]{mantegazza_mennucci_2003} that there exists a constant $C>0$ such that for all $i$, $\nabla^2_{w_i,w_i}[d(\cdot, S_{h}]_{p} \leq C$, so we get
 \begin{equation*}
   \forall i \geq 2, \nabla^2_{w_i,w_i}[d(\cdot, S_{h_\epsilon})]_{p} \leq C,
 \end{equation*}
 where $C$ is independent of $\epsilon$. Combining this inequality with \eqref{eq:second derivative w_1}, we get
 \begin{equation}\label{eq:laplacian of phi}
   \Delta \phi_\epsilon(p) \leq (n-1)C + \nabla^2_{w_1,w_1}[d(\cdot, \partial B(p_\epsilon,r_\epsilon)]_{p}.
 \end{equation}
 Let us pick some normal coordinates $(x^i)_{1\leq i \leq n}$ at $p_\epsilon$, such that we have $\partial_n = \dot{\gamma}$ on the curve $\gamma$.
 By the Gauss lemma, the set of points that are equidistant to $\partial B(p_\epsilon,r_\epsilon)$ are orthogonal to the geodesics starting out orthogonally from $\partial B(p_\epsilon,r_\epsilon)$. Therefore we have $\dot{c}(0) \cdot \dot{\gamma}(1) = 0$, or equivalently $w_1 \cdot \partial_n = 0$.
 In particular, if $(w_1^i)$ are the coordinates of $w$ relatively to the normal coordinates $(x^i)$ at $p_\epsilon$, we have $w_1^n = 0$.
 In these coordinates we have $d(x,\partial B(p_\epsilon,r_\epsilon)) = r_\epsilon - \abs{x}$, so
 \begin{align*}
   \nabla^2_{w_1,w_1}[d(\cdot, \partial B(p_\epsilon,r_\epsilon)](x)
   &= - \nabla^2_{w_1,w_1} \abs{x} \\
   &= -w_1^iw_1^j\left(\partial_{ij}\abs{x} - \Gamma_{ij}^k \partial_k \abs{x}\right) \\
   &=-w_1^iw_1^j\left(-\frac{\delta_{ij}}{\abs{x}} + \frac{x_ix_j}{\abs{x}^3} + \Gamma_{ij}^k(x) \frac{x_k}{\abs{x}} \right),
 \end{align*}
 where the $(\Gamma^k_{ij})$ are the Christoffel symbols.
 We apply this formula at the point $p$ of coordinates $(0,..,0,-d(p,p_\epsilon))$, and obtain
 \begin{align}\label{eq:second derivative in coordinates}
   \nabla^2_{w_1,w_1}[d(\cdot, \partial B(p_\epsilon,r_\epsilon)](p)
    = \frac{w_1^iw_1^j \delta_{ij}}{d(p,p_\epsilon)} - w_1^iw_1^j\Gamma^n_{ij}(p).
 \end{align}
 As we are in normal coordinates, there exist a continuous function $\rho :[0,+\infty)\to [0,+\infty)$ such that $\rho(0)=0$ and
 \begin{align*}
   \abs{\Gamma^k_{ij}(x)} \leq \rho(\abs{x}) \quad \text{and} \quad \abs{g^{ij}(x) - \delta_{ij}} \leq \abs{x}\rho(\abs{x}).
 \end{align*}
 Moreover, note that we only consider coordinates centered at some points $p_\epsilon$ contained in a bounded neighborhood of $p$, so the function $\rho$ is independent of $\epsilon$. With these remarks, \eqref{eq:second derivative in coordinates} yields
 \begin{align*}
   \abs{\nabla^2_{w_1,w_1}[d(\cdot, \partial B(p_\epsilon,r_\epsilon)](p)
   - \frac{\abs{w}^2}{d(p,p_\epsilon)}} \leq 2\left(\sum\limits_{i}w_1^i\right)^2\rho(d(p,p_\epsilon)).
 \end{align*}
 As $\epsilon\to 0$, the right hand side goes to $0$, and $\frac{\abs{w}^2}{d(p,p_\epsilon)} = \frac{1}{d((p,p_\epsilon))} \to +\infty$, so
 \begin{equation*}
   \nabla^2_{w_1,w_1}[d(\cdot, \partial B(p_\epsilon,r_\epsilon)](p) \to -\infty,
 \end{equation*}
 which concludes the proof with \eqref{eq:laplacian of phi}.
\end{proof}
The following lemma is a well known fact when the hypersurface $S$ considered is replaced by a point. We give a proof here, as we couldn't find it in the literature.
\begin{lemma}\label{lemma:smoothness}
  Let $S$ be a smooth hypersurface of $M$ such that $\overline{S}$ is compact. Let $\mu:[0,1]\to M$ be a geodesic such that $q:=\mu(0)\in S$. We assume that $\mu$ is length minimizing between $p :=\mu(1)$ and $\overline{S}$, and that $p$ is not a cut point of $q$ along $\mu$. Then the exponential map $\exp_S$ is a diffeomorphism from a neighborhood $U(q)$ of $q$ to a neighborhood $U(p)$ of $p$, and on $U(p)$ we have
  \begin{equation*}
    d(\cdot, S) = \pi_2 \circ (\exp_{S_{|U(q)}})^{-1},
  \end{equation*}
  where $\pi_2:S\times \R \to \R$ is the projection on the second coordinate. In particular, $d(\cdot, S)$ is smooth around $p$.
\end{lemma}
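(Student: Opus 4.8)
The plan is to recover $d(\cdot,S)$ near $p$ as the radial coordinate of $\exp_S^{-1}$, using the inverse function theorem. Three things must be checked: that $\mu$ meets $S$ orthogonally (so that $p$ lies in the image of $\exp_S$), that $d\exp_S$ is invertible at the relevant point, and that the normal geodesics produced by $\exp_S^{-1}$ are genuinely minimizing to $\overline S$.

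First I would use the first variation of arclength. Since $q$ realizes $\min_{\overline S}d(p,\cdot)$ and belongs to the smooth locus $S$ (not to $\overline S\setminus S$), one gets $\dot\mu(0)\perp T_qS$. Putting $L:=\mathrm{length}(\mu)=d(p,\overline S)$ and orienting $S$ so that $\nu_S(q)=\dot\mu(0)/L$, we obtain $\mu(t)=\mathrm{Exp}(tL\,\nu_S(q))=\exp_S(q,tL)$, hence $p=\exp_S(q,L)$. Moreover $L=d(p,\overline S)\le d(p,q)\le \mathrm{length}(\mu)=L$, so $\mu$ is also a minimizing geodesic from $q$ to $p$; as $p$ is not a cut point of $q$ along $\mu$, the map $\mathrm{Exp}_q$ is nonsingular along $\mu$ up to and including $p$ (in particular $d(\cdot,q)$ is smooth near $p$ and there is no conjugate point of $q$ on $\mu$).

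Next I would prove that $d(\exp_S)_{(q,L)}$ is invertible; since both spaces are $n$-dimensional, injectivity suffices. Differentiating $\exp_S$ along a curve $\alpha\subset S$ with $\dot\alpha(0)=X\in T_qS$ and in the radial variable gives $d(\exp_S)_{(q,L)}(X,a)=J(L)+\tfrac aL\,\dot\mu(1)$, where $J$ is the Jacobi field along $\mu$ with $J(0)=X$ and $\tfrac{D}{dt}J(0)=-W_S(X)$, $W_S$ the shape operator of $S$ for $\nu_S$. As $X$ and $W_S(X)$ are tangent to $S$, hence orthogonal to $\dot\mu(0)$, the field $J$ is normal to $\mu$; so $d(\exp_S)_{(q,L)}(X,a)=0$ forces $a=0$ and $J(L)=0$, i.e.\ a nonzero normal $S$-Jacobi field vanishing at $p$ --- a focal direction of $S$ at $p$. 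Ruling this out is the heart of the matter: I would combine the minimality of $\mu$ for $d(\cdot,\overline S)$ (which makes the index form for variations with endpoint $p$ fixed and initial point free on $S$ nonnegative) with the nonsingularity of $\mathrm{Exp}_q$ up to $p$, via the standard focal-point comparison, to reach a contradiction. \emph{This step --- extracting nonsingularity of $d\exp_S$, equivalently the absence of a focal point of $S$ at $p$, from the hypotheses --- is the one I expect to be the main obstacle.}

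Granting this, the inverse function theorem makes $\exp_S$ a diffeomorphism from a neighborhood $U$ of $(q,L)$ in $S\times[0,\infty)$ onto a neighborhood $U(p)$ of $p$; write its inverse $x\mapsto(\theta(x),t(x))$. The normal geodesic $s\mapsto\exp_S(\theta(x),s)$, $s\in[0,t(x)]$, joins $S$ to $x$, so $d(x,S)\le t(x)$. For the reverse inequality I would use compactness of $\overline S$: $d(x,\overline S)$ is attained at some $\theta_x\in\overline S$, and by compactness and continuity $\theta_x\to q$ and $d(x,\overline S)\to L$ as $x\to p$, so for $x$ near $p$ one has $\theta_x\in S$ and, by the first variation at $\theta_x$, $x=\exp_S(\theta_x,d(x,S))$ with $(\theta_x,d(x,S))\in U$. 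Injectivity of $\exp_S$ on $U$ then gives $(\theta_x,d(x,S))=(\theta(x),t(x))$, so $d(x,S)=t(x)=\pi_2\big((\exp_S|_U)^{-1}(x)\big)$, which is smooth. Shrinking $U$ and $U(p)$ yields the stated neighborhoods.
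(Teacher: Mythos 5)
Your overall route is the same as the paper's: orthogonality of $\mu$ to $S$ at $q$ by first variation, nonsingularity of $d\exp_S$ at $(q,L)$ so that the inverse function theorem applies, and identification of $d(\cdot,S)$ with the radial coordinate of the inverse by using compactness of $\overline{S}$ to localize the foot points of points near $p$ inside $U(q)$; this last part coincides with the paper's argument. The only divergence is at the step you flag yourself: the paper disposes of the nonsingularity of $d\exp_S$ in one line, quoting Sakai (Lemma 2.11) to the effect that since $p$ is not a cut point of $q$ along $\mu$, it is not a focal point of $S$, whereas you leave this step open.

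Your unease there is well founded, and you should not expect to close the gap from the hypotheses as literally stated. ``$\mu$ minimizes from $p$ to $\overline{S}$'' together with ``$p$ is not a cut point of $q$ along $\mu$'' does not imply that $p$ is not a focal point of $S$: take $M=\R^n$, $S$ the unit sphere, $q=e_1$, $p=0$ its center, $\mu$ the radius. Then $\mu$ minimizes the distance from $p$ to $S$, Euclidean space has no cut points, yet $p$ is a focal point of $S$ and $d(x,S)=1-|x|$ is not smooth at $p$. What actually makes the step work --- and what is actually verified each time the lemma is invoked in the proof of Theorem \ref{lemma:laplacian on the cut locus} --- is that $\mu$ extends past $p$ to a geodesic that is still length minimizing between its endpoint and $\overline{S}$ (in the application, $\gamma$ minimizes from $\partial B(p_\epsilon,r_\epsilon)$, resp.\ from $S_{h_\epsilon}$, all the way to $p_\epsilon$, which lies strictly beyond $p$). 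In other words the hypothesis should be read as ``$p$ is not a cut point of $S$ along $\mu$.'' With that hypothesis, the focal analogue of the no-minimization-past-the-first-conjugate-point theorem (a normal geodesic ceases to minimize $d(\cdot,\overline{S})$ at or before its first focal point) shows there is no focal point of $S$ on $\mu([0,1])$, so your equation $J(L)=0$ with $J(0)=X\neq 0$ is impossible and your index-form computation goes through. This strengthened hypothesis also repairs a second, smaller point that you and the paper both use tacitly: your claim that $\theta_x\to q$ (and the paper's ``$q$ is the unique closest point to $p$'') requires $q$ to be the \emph{only} foot point of $p$ on $\overline{S}$, which again follows from minimization strictly past $p$ (a second foot point $q'$ would give a broken minimizer from $q'$ to $\mu(1+\epsilon)$, forcing $q'=q$) but not from the hypotheses as written.
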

\begin{proof}
  As $p$ is not a cut point of $q$ along $\gamma$, it is not a focal point of $S$ (see \cite[lemma 2.11]{sakai1996riemannian}). In particular, $\exp_S$ is a local diffeomorphism from $(q,r)$ to $p$.
  As $q$ is the unique closest point to $p$ on the compact set $\overline{S}$, for any neighborhood $U(q)$ of $q$, there exists a neighborhood $U(p)$ of $p$ such that for any point $p' \in U(p)$ the closest points to $p'$ on $\overline{S}$ are inside $U(q)$. We choose such $U(p)$ and $U(q)$ so that $\exp_S$ is a diffeomorphism from $U(q)$ to $U(p)$. Let $p'\in U(p)$ and $r:=d(p',S)$.
  A closest point to $p'$ on $S$ is a point $q'\in U(q)$ such that $p' = \exp_S(q',r)$, and so $d(p',S) = r = \pi_2 \circ (\exp_{S_{|U(q)}})^{-1}(p')$. This concludes the proof.
\end{proof}

\section{Application to a variational problem}\label{section:application to a variational problem}
Recall that $u_m$ is a minimizer in \eqref{eq:variational problem}. We give a proof of corollary \ref{cor:cor}.

\begin{proof}[Proof of corollary \ref{cor:cor}]
  It is a consequence of the minimality of $u_m$ in \eqref{eq:variational problem} that we have in the sense of distributions:
  \begin{equation}\nonumber
    \Delta u_m \geq -m.
  \end{equation}
  This implies, from the theory of subharmonic distributions, that $u_m$ has an upper semi-continuous representative. It is proved in \cite[section 9]{harvey_lawson_subharmonic}, in the present setting of manifolds. For a reference about subharmonic functions in the euclidean space, see for instance \cite{demailly2007complex}.

  Let $x\in Cut_b(M)$. According to theorem \ref{lemma:laplacian on the cut locus} we can find a smooth function $\phi$ defined on an open neighborhood $N$ of $x$ such that:
  \begin{equation}\nonumber
    \phi\geq d_b, \quad \phi(x)=d_b(x) \quad \text{and} \quad \Delta \phi \leq -m-1.
  \end{equation}
  Then the function $u_m-\phi$ is non-positive strictly subharmonic on $N$ and so by the maximum principle for subharmonic functions, we have
  \[(u_m-\phi)(x)<0, \quad\textit{i.e.} \quad u_m(x)< d_b(x).\]

  Thus, the function $u_m-d_b$ is upper semi-continuous and negative on the compact set $Cut_b(M)$, so there exists $\epsilon>0$ such that the set $\{u_m < d_b - \epsilon\}$ is an open set containing $Cut_b(M)$. Now choose any smooth bump function $\rho:M\to[0,1]$ that is compactly supported in $\{u_m < d_b - \epsilon\}\setminus \{b\}$ and such that $\rho=1$ on $Cut_b(M)$, and a smooth function $d_{b,\epsilon}:M\to \R$ such that
  \begin{equation}
    \label{eq:regularization} \|d_{b,\epsilon}-(d_b-\frac{\epsilon}{2})\|_{L^\infty}< \frac{\epsilon}{2}.
  \end{equation}
  We set
  \begin{equation}
    \label{eq:dtilde}
    \widetilde{d_b}:=(1-\rho)d_b+\rho d_{b,\epsilon}.
  \end{equation}
  It is now an easy exercise to show that $\widetilde{d_b}$ verifies the desired conditions.
\end{proof}
%

%
\bibliographystyle{plain}
\bibliography{../../biblio/biblio}

\bigskip\noindent
François Générau:
Laboratoire Jean Kuntzmann (LJK),
Universit\'e Joseph Fourier\\
Bâtiment IMAG, 700 avenue centrale,
38041 Grenoble Cedex 9 - FRANCE\\
{\tt francois.generau@univ-grenoble-alpes.fr}\\

\end{document}